\theoremstyle{definition}
\newtheorem{example}{Example}[section]
\theoremstyle{plain}
\newtheorem{theorem}{Theorem}[section]
\newtheorem{corollary}{Corollary}[section]
\theoremstyle{remark}
\newtheorem{remark}{Remark}[section]
\newtheorem*{acknnowledgments}{Acknowledgments}
\newcommand{\tuple}[1]  {
	\left( #1 \right)
}
\newcommand{\catl}[1]  {
  \mathbb{#1}
}
\newcommand{\mor}[3]  {
  #1 \colon #2 \rightarrow #3
}
\newcommand{\id}[1]  {
	{\mathit{id}_{#1}}
}
\newcommand{\prob}[1]  {
	\overline{#1_n[x]}
}
\newcommand{\probc}  {
	\prob{\mathbf{C}}
}
\newcommand{\sprob}[1]  {	
	\mor{\id{\prob{#1}}}{\prob{#1}}{\prob{#1}}
}
\newcommand{\sprobr}  {	
	\sprob{\mathbf{R}}
}
\newcommand{\sprobra}  {	
	\sprob{\mathbf{A}}
}
\newcommand{\sprobrq}  {	
	\sprob{\mathbf{Q}}
}
\newcommand{\sprobc}  {	
	\sprob{\mathbf{C}}
}
\newcommand{\pprob}[1]  {
	#1_n[x]
}
\newcommand{\pprobr}  {
	\pprob{\mathbf{R}}
}
\newcommand{\pprobc}  {
	\pprob{\mathbf{C}}
}
\newcommand{\spprob}[1]  {	
	\mor{\id{\pprob{#1}}}{\pprob{#1}}{\pprob{#1}}
}
\newcommand{\spprobc}  {	
	\spprob{\mathbf{C}}
}
\author[1]{Micha{\l} R. Przyby{\l}ek}
\author[2]{Pawe{\l} Siedlecki}
\affil[1]{Institute of Informatics; 
Department of Mathematics, Informatics and Mechanics; 
University of Warsaw; Warsaw, Poland}
\affil[2]{Institute of Applied Mathematics and Mechanics; 
Department of Mathematics, Informatics and Mechanics; 
University of Warsaw; Warsaw, Poland}
{
    \makeatletter
    \renewcommand\AB@affilsepx{: \protect\Affilfont}
    \makeatother

    \affil[ ]{Emails}

    \makeatletter
    \renewcommand\AB@affilsepx{, \protect\Affilfont}
    \makeatother

    \affil[1]{mrp@mimuw.edu.pl}
    \affil[2]{psiedlecki@mimuw.edu.pl}
}
\title{A note on the complexity of a phaseless polynomial interpolation}
\date{\today}
\begin{document}
\sloppy

\maketitle

\abstract{In this paper we revisit the classical problem of polynomial interpolation, with a slight twist; namely, polynomial evaluations are available up to a group action of the unit circle on the complex plane. It turns out that this new setting allows for a phaseless recovery of a polynomial in a polynomial time. }

\section{Introduction}
Polynomial interpolation is a classical computational problem considered in numerical mathematics: given a set of points and values find a polynomial of a given degree 
that assumes given values at given points. A common interpretation is that a polynomial is fitted to some observational data. It is tacitly assumed that a data set 
is given \emph{exactly}. Hence if, for instance, we are fitting a polynomial to some measured observable, our data set faithfully represents measured values. However, in some 
applications (e.g., signal processing, speech recognition, complex quantities processing) our measurements do not  faithfully represent all characteristics of a measured signal, 
namely, they lack a \emph{phase}. 
This observation has led to the emergence of a new subject of \emph{phase retrieval}, which in short can be described as a study of what can be reconstructed if 
measurements are restricted to magnitudes while phases are lost. More on phase retrieval can be found, e.g., in \cite{phase-overview}. 

Recently the problem of phase retrieval has inspired a study of problems and algorithms using \emph{absolute value information} from the point of view of 
\emph{information-based complexity} (IBC). 
While most information-based complexity research concerned problems with information given as evaluations of linear functionals, 
\cite{avi} consider problems and algorithms that rely on information consisting of modules of real or complex valued 
linear functionals. 

In this paper we study \emph{phaseless} polynomial interpolation understood as an 
algorithmic task to construct a polynomial up to a phase, based on its values 
that are available up to a phase. This means that for a set of absolute values 
of evaluations of an unknown polynomial $f$, i.e.,  
$|f(x_1)|,\ldots,|f(x_k)|$ for some points $x_1,\ldots,x_k$, we wish to find $uf$ 
for some number $u$ with $|u|=1$. We analyze the relation between the degree 
of a polynomial $f$ and the computational effort needed to perform this 
\emph{phaseless interpolation}. We show that it is possible to do it in a 
polynomial time. It turns out that while the result for real polynomials is quite straightforward, the 
complex case is more subtle; for instance, we cannot rely on evaluations given at 
arbitrary distinct points. 

 The paper is organized as follows. The next section describes the computational framework for the problems of polynomial identification. Section~\ref{s:real} studies 
phaseless identification of real polynomials, whereas Section~\ref{s:complex} investigates phaseless identification of complex polynomials. We conclude the paper in Section~\ref{s:conclusion} and indicate areas of further work. 
 
\section{Computational framework and problem statement}

To fix the terminology let us recall some general notions of IBC. Since in this 
work we are 
dealing with problems that are meant to be solved exactly, we will not need much beside 
basic IBC notions.
We are interested in \emph{identification problems}, i.e., 
problems of the form: 
$$\mor{\id{V}}{V}{V}$$
and their \emph{solutions} that consist of an \emph{information operator} 
$N\colon V\to\mathbf{K}^*$ and an \emph{algorithm} 
$\phi\colon\mathbf{K}^*\to V$ such that $\phi\circ N=id_V$. 
Here $V$ is a set,
$\mathbf{K}$ is either the field of real numbers $\mathbf{R}$ or 
the field of complex numbers $\mathbf{C}$, 
and $\mathbf{K}^*$ is 
the set of all finite sequences of elements of $\mathbf{K}$. 
Let $\Lambda$ be some class of \emph{elementary information 
operations} that can act on elements of $V$, i.e., elements of $\Lambda$ are maps 
$V\to\mathbf{K}$.

Recall that, following common IBC convention, an information operator 
is a mapping 
$N\colon V\to\mathbf{K}^*$ such that for all $v\in V$: 
$$N(v)=\left(L_1(v),L_2(v;y_1),\ldots,L_{n(v)}(v;y_1,\ldots,y_{n(v)-1})\right),$$
where $L_1\in\Lambda$, $y_1=L_1(v)$, and for $i\in\{2,\ldots,n(v)\}$
$$L_i\colon V\times\mathbf{K}^*\to\mathbf{K}$$
$L_i(\cdot,\bar{y})\in\Lambda$ for all $\bar{y}\in\mathbf{K}^*$, and $y_i=L_i(v;y_1,\ldots,y_{i-1})$. 
At every step of a construction of $N(v)$ the decision whether 
to evaluate another elementary information operation or stop the computation of 
$N(v)$ is based only on the 
previous evaluations. Note that the number $n(v)$ of evaluations depends on $v$ via the 
sequence of intermediate values of elementary information operations $y_1,\ldots,y_k$.
More on information operators and their role in IBC in more general settings can 
be found, e.g., in \cite{tww1988} and \cite{plaskota1996noisy}. 

All objects that we will deal with can be described as finite sequences of real numbers as follows. We treat complex numbers as pairs of real numbers via the usual correspondence 
$z\leftrightarrow\tuple{\textnormal{Re}(z),\textnormal{Im}(z)}$.
Similarly, we treat real polynomials as finite sequences of reals via
$a_0+a_1x+\cdots+a_nx^n\leftrightarrow \tuple{a_0,a_1\ldots,a_n}$, 
and complex polynomials as finite sequences of reals via
$a_0+a_1x+\cdots+a_nx^n\leftrightarrow\tuple{\textnormal{Re}(a_0),
\textnormal{Im}(a_0),\textnormal{Re}(a_1),\textnormal{Im}(a_1),\ldots,
\textnormal{Re}(a_n),\textnormal{Im}(a_n)}$. For this reason, our algorithms essentially are partial functions 
$\phi\colon\mathbf{R}^*\to\mathbf{R}^*$. We choose for our computational model the real Blum-Shub-Smale machine augmented with the ability to compute $\sqrt{(\cdot)}$ and $\sin(\cdot)$ and assume that an algorithm has to be computable in this model. We will also, by a slight abuse of the term, call $\sqrt{(\cdot)}$ and $\sin(\cdot)$ arithmetic operations. 
More on Blum-Shub-Smale model of computation over the reals can be found in \cite{bss}.

We assume that all arithmetic operations on reals are allowed and are performed 
in a constant time. All described algorithms make use only of the possibility to 
store and access finite sequences of real numbers, perform arithmetic operations on them 
and do bounded iteration. Hence the cost of an algorithm is understood as the 
(maximal)
number of real arithmetic operations performed during its execution.

We define the computational cost of a solution $(N,\phi)$ as:
$$\mbox{cost}(N,\phi)=\sup_{v\in V}\left(\mbox{cost}_i(N,v)+ 
\mbox{cost}_a(\phi,N(v))\right)$$
where, for $v\in V$ and $y\in\mathbf{R}^*$:
\begin{align*}
\mbox{cost}_i(N,v)=&\ n(v)&\\
\mbox{cost}_a(\phi,y)=&\ \mbox{the number of arithmetic operations performed}&\\
&\ \mbox{by}\ \mbox{an algorithm}\ \phi\ \mbox{on input}\ y&
\end{align*}

The computational complexity of an identification problem $\mor{\id{V}}{V}{V}$
is defined as:
$$\mbox{comp}(\id{V})=\inf\{\mbox{cost}(N,\phi)\colon 
(N,\phi)\ \mbox{is a solution of}\ \id{V}\}$$

\vskip 1pc
 
For a real or complex 
linear space $V$, let $\equiv$ be a binary relation on $V$ defined as: 
$$x\equiv y\ \ \mbox{iff}\ \ x=u y\ \mbox{for some number}\ u\ 
\mbox{with}\ |u|=1$$ 
Clearly, $\equiv$ is an equivalence relation. We will write $\overline{V}$ for $V/\equiv$.

Suppose that $\Lambda$ is some class of elementary information operations consisting 
of linear functionals.
We define a new class of information operations:
$$|\Lambda|=\{|L|\colon L\in\Lambda\}$$
where $|L|(v)=|L(v)|$ for $v\in V$, here $|\cdot|$ is an absolute value in case of real 
spaces, and modulus for complex spaces. 
Note that elements of $|\Lambda|$ act on $\overline{V}$ in a natural way since 
$|L(uv)|=|L(v)|$ for all $v\in V$ and all numbers $u$ such that $|u|=1$.

We consider two identification problems related to  
the classes $\Lambda$ and $|\Lambda|$, correspondingly:
$$\id{V}\colon V\to V\ \ \mbox{and}\ \ 
\id{\overline{V}}\colon \overline{V}\to \overline{V}$$
The problem of identification of $v \in V$ based on values $L_1(v), L_2(v), \ldots, L_n(v)$ for some $L_i \in \Lambda$ is the exact identification with exact evaluations of $L_i$. 
This is the problem $id_V$ when the class $\Lambda$ is available.
In the setting of phase retrieval, one instead tries to identify $v \in V$ up to a unit $u$ (i.e.~a scalar $u$ such that $|u| = 1$) based on absolute values $|L_1(v)|, |L_2(v)|, \dotsc, |L_n(v)|$. This is the problem $id_{\overline{V}}$ when the class 
$|\Lambda|$ is available. 

If $L \in \Lambda$ is linear, we may fix a single $v$ from the orbit $\{uv \colon |u| = 1\}$ by fixing $L(v)$ from the orbit $\{uL(v) \colon |u| = 1\}$. Therefore, instead of considering $\id{\overline{V}}$ with available information 
$|\Lambda|$ we can equivalently consider the problem $\id{V}$, i.e., 
the exact identification of $v$ with a single exact evaluation $L(v)$ and a sequence of evaluations  $|L_1(v)|, |L_2(v)|, \dotsc, |L_n(v)|$. In the remainder we shall use the above reformulation of quotient problems.

\vskip 2pc

For any field $\mathbf{K}$, let: 
$$\pprob{\mathbf{K}}=\{a_0+a_1x+\cdots+a_nx^n\colon \ a_0,a_1,\ldots,a_n\in\mathbf{K}\}$$ 
be the linear space of all polynomials 
of degree at most $n$ ($n\in\mathbb{N}$) and coefficients from $\mathbf{K}$ , and let  
$\prob{\mathbf{K}}=\pprob{\mathbf{K}}/\equiv$. Note that for every 
$f\in\pprob{\mathbf{K}}$ and $s\in\mathbf{K}$ the evaluation of $f$ at $s$ is well 
defined as: 
$$L_{s}(f)=a_0+a_1s+\cdots+a_ns^n$$

We consider $\mathbf{K}$ being the field 
of real numbers, some subfield of it, or the field of complex numbers.
We assume that the domain of any polynomial 
from $\mathbf{K}_n[x]$ is: 
\begin{align*}
&\mathbf{R},\  \textnormal{if}\  \mathbf{K}\  \textnormal{is a subfield of}\ \mathbf{R}\\ 
&\mathbf{C},\  \textnormal{if}\  \mathbf{K}=\mathbf{C}
\end{align*}

We will consider two basic classes of elementary information operations acting on 
spaces of polynomials:
\begin{align*}
\Lambda^{\text{std}}=\ &\{f\mapsto f(x)\colon \ \mbox{for some}\ x\ 
\mbox{in the domain of}\ f\}&\\
|\Lambda^{\text{std}}|=\ &\{f\mapsto |f(x)|\colon \ \mbox{for some}\ x\ 
\mbox{in the domain of}\ f\}&
\end{align*}
In the sequel we will use algorithms that may rely on information 
operations from the class $\Lambda^{\text{std}}\cup |\Lambda^{\text{std}}|$, we will 
always make it explicit how many operations from each of the classes have been used 
by an algorithm. We will call the usage of an operation from $\Lambda^{\text{std}}$ 
an \emph{exact} evaluation, while the usage of an operation from $|\Lambda^{\text{std}}|$ 
will be called a \emph{phaseless} (or \emph{signless} in the real case) evaluation.

Our aim is to investigate the computational 
complexity of the problem: 
$$\sprob{\mathbf{K}}$$ 
when $|\Lambda^{\text{std}}|$ is available, or equivalently of the problem: 
$$\spprob{\mathbf{K}}$$ 
if we can use any number of information operations from $|\Lambda^{\text{std}}|$ and 
exactly one operation from $\Lambda^{\text{std}}$.

\section{Real polynomials}
\label{s:real}
This section deals with phaseless identification of real polynomials. We shall start with the following observation.
\begin{theorem}
\label{t:non-adaptreal}
The computational complexity of the problem 
$$\sprobr$$ 
for the class $|\Lambda^{\textnormal{std}}|$ is polynomial in $n$. 
Furthermore, it has a solution $(N,\phi)$ such that $N$ is nonadaptive and $\textnormal{cost}_i(N,v)=2n+1$ for all 
$v\in \overline{\mathbf{R}_n[x]}$, and 
$\sup\{\textnormal{cost}_a(\phi,y)\colon y\in N(\overline{\mathbf{R}_n[x]})\}$ 
is polynomial in $n$.
\end{theorem}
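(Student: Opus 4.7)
The central observation is that for $f \in \mathbf{R}_n[x]$ the squared modulus $|f(x)|^2 = f(x)^2$ is itself a polynomial in $x$ of degree at most $2n$. Since any polynomial of degree at most $2n$ is uniquely determined by its values at $2n+1$ pairwise distinct reals, one can hope to recover $f^2$ from $2n+1$ phaseless evaluations of $f$ and then extract $f$ up to sign as a polynomial square root of $f^2$.

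Accordingly, I would take $N$ to be the nonadaptive information operator that evaluates $|f(x_k)|$ at fixed distinct points $x_0,\ldots,x_{2n}$ (for concreteness, $x_k = k$), which immediately gives $\textnormal{cost}_i(N,v) = 2n+1$ for every $v$. The algorithm $\phi$ then performs two steps: \textbf{(a)} square the inputs to obtain $y_k = f(x_k)^2$ and apply Lagrange (or Newton) interpolation to recover the unique $p \in \mathbf{R}_{2n}[x]$ with $p(x_k)=y_k$; since $f^2$ interpolates the same data, $p = f^2$. \textbf{(b)} Extract a representative of the class $[f]$ from $p$.

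For step \textbf{(b)}, write $f = \sum_{i=0}^{n} a_i x^i$ and $p = \sum_{m=0}^{2n} p_m x^m$; let $d$ be the largest index with $p_{2d} \neq 0$, so that $\deg f = d$. Then $p_{2d} = a_d^2 > 0$, and I set $a_d = \sqrt{p_{2d}}$, which picks a canonical representative of $\{f,-f\}$. Working downward through $k = d-1,d-2,\ldots,0$, the identity
$$ p_{d+k} \;=\; 2 a_d a_k + \sum_{i=k+1}^{d-1} a_i \, a_{d+k-i} $$
determines $a_k$ explicitly from $a_d$ and the already-computed $a_{k+1},\ldots,a_{d-1}$ by a single division; the coefficients $a_{d+1},\ldots,a_n$ (if any) are set to zero.

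The whole procedure uses $O(n^2)$ arithmetic operations for interpolation, $O(n^2)$ for the square-root recursion, and one invocation of $\sqrt{\cdot}$, giving the claimed polynomial bound on $\textnormal{cost}_a(\phi,y)$ uniformly in $y \in N(\overline{\mathbf{R}_n[x]})$. The only delicate point is the correctness of step \textbf{(b)}: one has to verify that $p = f^2$ really forces $p_{2d}>0$ at the correct index $d$ (so the square root is real) and that the recursion has a unique solution once the sign of $a_d$ is fixed. Both are immediate in the real case from $f^2 \ge 0$ and from the fact that the coefficient $a_k$ appears linearly in $p_{d+k}$ with nonzero coefficient $2a_d$; this is precisely why the real case is ``quite straightforward'' and why the complex case, where no analogous positivity and no similarly benign factorisation are available, will require extra care.
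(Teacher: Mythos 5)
Your proposal is correct and takes essentially the same route as the paper's proof: square the $2n+1$ phaseless evaluations, interpolate the degree-at-most-$2n$ polynomial $f^2$, and recover the coefficients of $f$ up to sign via the quadratic convolution identities, using a single real square root to fix a representative of $\{f,-f\}$. The only (immaterial) difference is that you anchor the recursion at the leading coefficient $a_d$ and solve downward, whereas the paper anchors it at the lowest nonzero coefficient $a_j$ and solves upward; both arguments are otherwise identical, down to the $O(n^2)$ cost bound and the implicit treatment of the zero polynomial.
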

\begin{proof}
Let us consider the general form of a real polynomial:
$$p(x) = a_0 + a_1x + a_2x^2 + \cdots + a_nx^n$$
where $a_0,a_1,\ldots,a_n\in\mathbf{R}$.
Suppose that $0 \leq j \leq n$ is the smallest index such that $a_j$ is non-zero. The square of the absolute value of $p$ is
\begin{align*}
|p(x)|^2 &= (x^j(a_j + a_{j+1}x + a_{j+2}x^2 + \cdots + a_nx^{n-j}))^2\\
&=  x^{2j}(A_0 + A_1x + A_2x^2 + \cdots + A_{2(n-j)}x^{2(n-j)}) \\
\end{align*}
for some $A_0,A_1,\ldots,A_{2(n-j)}\in\mathbf{R}$.
Because $|p(x)|^2$ is a real polynomial of degree at most $2n$ we may determine its coefficients $A_k$ by using its evaluations at (any) $2n+1$ distinct real points and 
performing polynomial interpolation. 
Furthermore, every evaluation of $|p(x)|^2$ is of the form $(L(p))^2$, where 
$L \in |\Lambda^{\textnormal{std}}|$.

Now, observe that:
\begin{align*}
A_k &= \left\{
  \begin{array}{ll}
    a_j^2 & \textnormal{if}\  k = 0  \\
    2a_ja_{j+k} + \sum_{i=1}^{k-1} a_{j+i}a_{j+k-i} & \textnormal{if}\  
    1 \leq k \leq 2(n-j)  \\
  \end{array} \right.
\end{align*}
Therefore, assuming $\tuple{A_k}_{0 \leq  k \leq 2(n-j)}$ are given, coefficients 
$a_m$ may be computed inductively:
\begin{align*}
a_{m} &= \left\{
  \begin{array}{ll}
    0 & \textnormal{if}\  m < j \\
    \pm \sqrt{A_0} & \textnormal{if}\  m = j  \\
    \frac{A_{m-j} - \sum_{i=1}^{m-j-1} a_{j+i}a_{m-i}}{2a_j} & \textnormal{if}\  
    j+1 \leq m \leq n \\
  \end{array} \right.
\end{align*}
Thus, we may recover the polynomial $p(x)$ up to a sign (i.e., the sign of $a_j$), which completes the proof.
\end{proof}


One may wonder if $2n+1$ information operations are needed. The answer is no, provided we are satisfied with algorithms with exponential cost that use an adaptive information.
\begin{theorem}
\label{t:adaptreal}
The problem 
$$\sprobr$$ 
has a solution $(N,\phi)$ such that $N$ is an adaptive information operator using information 
operations from the class $|\Lambda^\textnormal{std}|$ and such that $\textnormal{cost}_i(N,v)=n+2$ for all 
$v\in \overline{\mathbf{R}_n[x]}$, and
$\sup\{\textnormal{cost}_a(\phi,y)\colon y\in N(\overline{\mathbf{R}_n[x]})\}$ 
is exponential in $n$.
\end{theorem}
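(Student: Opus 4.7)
The plan is to use $n+1$ phaseless evaluations at a priori fixed points to narrow the unknown polynomial $p$ down to at most $2^n$ candidate equivalence classes, and then to use a single additional adaptive evaluation to single out the correct one.

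Concretely, I would first fix $n+1$ pairwise distinct real points $x_0,\ldots,x_n$ and perform phaseless evaluations to obtain $v_i = |p(x_i)|$ for $0\le i\le n$. If all $v_i$ vanish then $p\equiv 0$ and we are done; otherwise let $i^\ast$ be the smallest index with $v_{i^\ast}\ne 0$. For each sign vector $\epsilon=(\epsilon_0,\ldots,\epsilon_n)\in\{-1,+1\}^{n+1}$ with $\epsilon_{i^\ast}=+1$, Lagrange interpolation yields a unique polynomial $p_\epsilon\in\pprobr$ satisfying $p_\epsilon(x_i)=\epsilon_i v_i$. Together these produce at most $2^n$ candidates, one of which represents $[p]\in\probr$.

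Next I would perform the $(n+2)$-nd phaseless evaluation at a point $y\in\mathbf{R}$ chosen so that the numbers $|p_\epsilon(y)|$ are pairwise distinct across distinct candidate classes; matching $|p(y)|$ against this list then uniquely identifies $[p]$. Such a $y$ exists because, for any two candidates representing distinct equivalence classes, the polynomial $(p_\epsilon-p_{\epsilon'})(p_\epsilon+p_{\epsilon'})$ is a nonzero real polynomial of degree at most $2n$; the union of the root sets over all $O(4^n)$ pairs is therefore a finite subset of $\mathbf{R}$, so the algorithm can locate $y$ simply by scanning $y=0,1,2,\ldots$ and testing each in turn, with success guaranteed after a bounded (exponential in $n$) number of trials.

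The resulting $N$ is adaptive since the point $y$ depends on $v_0,\ldots,v_n$, uses only operations from $|\Lambda^{\textnormal{std}}|$, and satisfies $\textnormal{cost}_i(N,v)=n+2$ for every $v\in\probr$. The algorithmic cost is exponential in $n$ because the algorithm enumerates, interpolates, and pairwise compares up to $2^n$ candidate polynomials while searching for a separating $y$. The main step to justify is the existence of such a separating point; the crux is the elementary observation that two distinct equivalence classes of polynomials in $\pprobr$ can have coinciding absolute values at only finitely many real points, so a generic $y$ (indeed an integer from a bounded range) necessarily discriminates among all $2^n$ candidates.
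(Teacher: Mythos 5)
Your proposal is correct, and its skeleton matches the paper's own proof: interpolate all sign patterns at $n+1$ fixed points to produce exponentially many candidate polynomials, then adaptively find one extra evaluation point separating all candidates, accepting exponential algorithmic cost for that search. The genuine difference is the nature of the final evaluation. The paper first invokes its Section~2 reformulation (solving $\sprobr$ with $|\Lambda^{\textnormal{std}}|$ is equivalent to exact identification from $n+1$ signless evaluations plus \emph{one exact} evaluation) and then separates candidates via the degree-at-most-$n$ differences $w_{\overline{b}} - w_{\overline{b'}}$, each having at most $n$ roots. You instead keep all $n+2$ evaluations phaseless: you quotient out the sign ambiguity combinatorially by pinning $\epsilon_{i^\ast}=+1$ at the first nonvanishing value, and you separate \emph{distinct candidate classes} via the nonzero degree-at-most-$2n$ products $(p_\epsilon-p_{\epsilon'})(p_\epsilon+p_{\epsilon'})$, correctly noting that both factors are nonzero exactly because the classes are distinct (and your phrasing implicitly handles duplicate candidates arising when some $v_i=0$, since those coincide as polynomials and need not be separated). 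This buys a proof of the theorem literally as stated --- an operator using only $|\Lambda^{\textnormal{std}}|$ --- without routing through the equivalence, at the harmless price of doubling the degree bound; your integer scan $y=0,1,2,\ldots$ with exact testing of each pair is also a cleaner, fully BSS-realizable search than the paper's remark about separating an interval from the zero set computed to finite precision. One cosmetic point: in the all-zero branch you stop after $n+1$ evaluations, while the theorem asserts $\textnormal{cost}_i(N,v)=n+2$ for all $v$; issue a dummy $(n+2)$-nd phaseless evaluation there to match the statement verbatim.
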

\begin{proof}
We will equivalently demonstrate that it is possible to identify a 
real polynomial by $n+1$ signless evaluations and one exact evaluation. 

Let us consider $p \in \pprobr$ and a sequence $x_0, x_1, \ldots, x_n \in \mathbf{R}$ of distinct real numbers. Denote by $\catl{B}_k = \{-1, 1\}^k$ the $k$-th power of the one-dimensional unit circle and by $\catl{B}_k^{(2)} = \{\tuple{\overline{x}, \overline{y}} \in \catl{B}_k \times \catl{B}_k \colon \overline{x} \neq \overline{y}\}$. For every tuple $\overline{b} \in \catl{B}_{n+1}$ there is exactly one polynomial $w_{\overline{b}} \in \pprobr$ such that $w_{\overline{b}}(x_i) = b_i|p(x_i)|$ for all $0 \leq i \leq n$. 
Therefore, there are exactly $2^{n+1}$ polynomials with values, up to a sign, matching 
those of $p$ in each of the points $x_0,x_1,\ldots,x_n$. 
Moreover, if $\overline{b} \neq \overline{b'}$ then the equation $w_{\overline{b}}(x) = w_{\overline{b'}}(x)$ has at most $n$ solutions. It follows that the set 
$$\catl{S} = \{x \in \mathbf{R} \colon \exists_{\tuple{\overline{b}, \overline{b'}}\in \catl{B}_{n+1}^{(2)}} \;  w_{\overline{b}}(x) = w_{\overline{b'}}(x)\}$$ 
is finite, thus does not exhaust the whole $\mathbf{R}$. Therefore, there exists $x \in \mathbf{R}$ that differentiates any pair of polynomials $w_{\overline{b}}$ and  $w_{\overline{b'}}$, and we may perform one exact evaluation at that $x$, which will uniquely identify $\overline{b}$ such that $w_{\overline{b}}=p$, and hence also $p$. Moreover, $x$ can be effectively found in exponential time by a naive algorithm that separates an interval from the set of all zeros of polynomials $w_{\overline{b}} - w_{\overline{b'}}$ computed to \emph{any} finite precision. 
\end{proof}
One may also observe that since $\catl{S}$ is of Lebesgue measure zero, any point randomly chosen from any non-degenerated interval will almost surely not belong to $\catl{S}$. 

\begin{example}
Consider $p \in \mathbf{R}_n[x]$ for $n=3$. Let us choose $\{1, 2, 3, 4\}$ for the set of four evaluation points and assume that $|p(x)| = 1$ for every $x \in \{1, 2, 3, 4\}$. Figure~\ref{f:real:example} shows all possible polynomials of degree at most $3$ that satisfy the signless evaluations.
\begin{figure}
  \centering
    \includegraphics[width=0.7\textwidth]{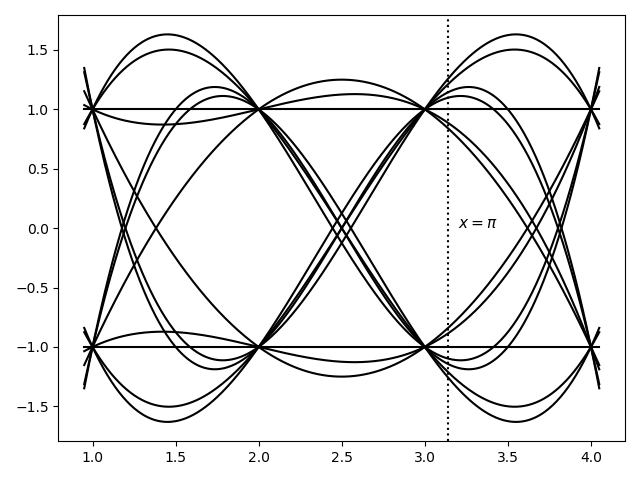}
    \caption{All possible 3-polynomials $p$ with $|p(x)| = 1$ for $x \in \{1, 2, 3, 4\}$.}
    \label{f:real:example}
\end{figure}
The polynomials whose value at $1$ is positive (i.e.~equal to $1$) are:
\begin{align*}
p_1(x) &= 1\\
p_2(x) &=-x^3 + 8x^2 - 19x + 13\\
p_3(x) &=x^3 - 7x^2 + 14x - 7\\
p_4(x) &=-\frac{1}{3}x^3 + 2x^2 - \frac{11}{3}x + 3\\
p_5(x) &=x^2 - 5x + 5\\
p_6(x) &=-\frac{4}{3}x^3 + 10x^2 - \frac{68}{3}x + 15\\
p_7(x) &=\frac{2}{3}x^3 - 5x^2 + \frac{31}{3}x - 5\\
p_8(x) &=-\frac{1}{3}x^3 + 3x^2 - \frac{26}{3}x + 7
\end{align*}
The remaining polynomials are their opposites. Observe that at $x = \pi$ no two non-opposite polynomials have the same absolute value, because $\pi$ is a transcendental number (i.e., it is not algebraic) and the coefficients of the above polynomials are rational. Therefore, we can choose $x = \pi$ for our final evaluation. 
\end{example}

The above example suggests that some information about coefficients of the polynomials can be used to find evaluation points in a non-adaptive way. 

\begin{theorem}
\label{t:non-adaptcountabler}
Suppose that $\mathbf{A}$ is a subfield of the field $\mathbf{R}$ of real numbers, such that the field extension $\mathbf{A}\subseteq \mathbf{R}$ is transcendental.  
The problem 
$$\sprobra$$ 
has a solution $(N,\phi)$ such that $N$ is a non-adaptive information operator using information 
operations from the class $|\Lambda^\textnormal{std}|$ and such that $\textnormal{cost}_i(N,v)=n+2$ for all 
$v\in \overline{\mathbf{A}_n[x]}$, and 
$\sup\{\textnormal{cost}_a(\phi,y)\colon y\in N(\overline{\mathbf{R}_n[x]})\}$ 
is exponential in $n$.
\end{theorem}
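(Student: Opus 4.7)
The plan is to exploit the transcendental extension $\mathbf{A}\subseteq\mathbf{R}$ in order to pre-select an evaluation point that simultaneously separates every pair of candidate polynomials, thereby converting the adaptive scheme of Theorem~\ref{t:adaptreal} into a non-adaptive one while keeping the query count at $n+2$.

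First I would fix, once and for all, any $n+1$ distinct points $x_0, x_1, \ldots, x_n \in \mathbf{A}$ together with a single point $t \in \mathbf{R}$ that is transcendental over $\mathbf{A}$; such a $t$ exists precisely because the extension $\mathbf{A}\subseteq\mathbf{R}$ is transcendental. The non-adaptive information operator $N$ then consists of the $n+1$ signless evaluations at $x_0,\ldots,x_n$ together with the one exact evaluation at $t$, using the reformulation of $\id{\overline{V}}$ discussed in Section~2. Nothing in this choice depends on the unknown polynomial.

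Next, as in the proof of Theorem~\ref{t:adaptreal}, the algorithm $\phi$ enumerates the $2^{n+1}$ candidate polynomials $w_{\overline{b}}\in\pprobr$ satisfying $w_{\overline{b}}(x_i)=b_i|p(x_i)|$ for $\overline{b}\in\catl{B}_{n+1}$. The key new observation is that since $p\in\pprobra$ and each $x_i\in\mathbf{A}$, the values $|p(x_i)|$ lie in $\mathbf{A}$, so Lagrange interpolation through points with $\mathbf{A}$-coordinates forces every $w_{\overline{b}}$ to have coefficients in $\mathbf{A}$. Consequently, for any two distinct tuples $\overline{b}\neq\overline{b'}$, the difference $w_{\overline{b}}-w_{\overline{b'}}$ is a nonzero polynomial of degree at most $n$ with coefficients in $\mathbf{A}$, and all of its roots are algebraic over $\mathbf{A}$. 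Because $t$ is transcendental over $\mathbf{A}$, it cannot be such a root, so $w_{\overline{b}}(t)\neq w_{\overline{b'}}(t)$ for every such pair. The single exact evaluation at $t$ therefore picks out the unique $\overline{b}$ for which $w_{\overline{b}}=p$, and the cost analysis of Theorem~\ref{t:adaptreal} applies verbatim, giving an exponential bound on $\textnormal{cost}_a(\phi,\cdot)$.

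The only real subtlety is verifying the closure property that every candidate interpolant $w_{\overline{b}}$ actually lies in $\pprobra$; once this is in hand, non-adaptivity is automatic, because the separating point $t$ depends only on the field $\mathbf{A}$ and not on the observed values or on the particular $p$. The hypothesis of transcendence of the extension is used in exactly one place and is essential precisely because we need a single $t$ to avoid the (finite) algebraic locus cut out by all pairwise differences of candidates.
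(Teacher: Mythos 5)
Your proposal is correct and follows essentially the same route as the paper's own proof: fix the $n+1$ signless evaluation points in $\mathbf{A}$, observe that closure of $\mathbf{A}$ under the field operations forces every candidate interpolant $w_{\overline{b}}$ to lie in $\mathbf{A}_n[x]$ (the paper phrases this via invertibility of the Vandermonde matrix over $\mathbf{A}$, you via Lagrange interpolation --- the same fact), and then use a single exact evaluation at a point $t$ transcendental over $\mathbf{A}$, which cannot be a root of any nonzero difference $w_{\overline{b}}-w_{\overline{b'}}\in\mathbf{A}_n[x]$. The only quibble is your parenthetical ``(finite)'' in the last paragraph: for non-adaptivity $t$ must avoid the roots of \emph{all} nonzero polynomials in $\mathbf{A}_n[x]$ ranging over all instances, a set that is generally countable rather than finite --- but your actual argument already handles this correctly via transcendence of $t$.
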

\begin{proof}
We proceed similarly as in the proof of Theorem~\ref{t:non-adaptreal}.

We will equivalently demonstrate that it is possible to identify a 
polynomial from $\mathbf{A}_n[x]$ by $n+1$ signless evaluations and one exact evaluation. However, this time all evaluation 
points (including the one to perform an exact evaluation) 
can be fixed beforehand and used without any need for adaption. 

Let us consider $p \in \mathbf{A}_n[x]$ and a sequence $x_0, x_1, \ldots, x_n \in \mathbf{A}$ of distinct elements of $\mathbf{A}$. 
For every tuple $\overline{b} \in \{-1, 1\}^{n+1}$ there is exactly one polynomial $w_{\overline{b}} \in \mathbf{A}_n[x]$ 
such that $w_{\overline{b}}(x_i) = b_i |p(x_i)|$ for all 
$0 \leq i \leq n$. This follows from two observations:
\begin{itemize}
\item $-1\mathbf{A} = \mathbf{A}$, since $\mathbf{A}$ is a field
\item a matrix with entries from $\mathbf{A}$ has an inverse over $\mathbf{A}$ iff it has an inverse over $\mathbf{R}$
\end{itemize}

Therefore, there are exactly $2^{n+1}$ polynomials from $\mathbf{A}_n[x]$ 
with values, up to a sign, matching 
those of $p$ in each of the points $x_0,x_1,\ldots,x_n$. 

From the assumption, the set: 
$$\catl{D}=\mathbf{R} \setminus \{x \in \mathbf{R} \colon \exists_{w\in\mathbf{A}_n[x]} \; w(x) = 0\}$$
is non-empty. By definition any $x \in \catl{D}$ differentiates every pair of distinct polynomials $w,v\in\mathbf{A}_n[x]$. Thus we may perform one exact evaluation at any fixed $x\in\catl{D}$, which will uniquely identify $\overline{b}$ such that $w_{\overline{b}}=p$, and hence also $p$.
\end{proof}

\begin{corollary}
The problem 
$$\sprobrq$$ 
can be solved using $n+1$ signless evaluations at any distinct points 
$x_0,x_1,\ldots,x_n\in\mathbf{Q}$, and one evaluation at any point 
$x\in\mathbf{R}$ such that $x$ is a transcendental number.
\end{corollary}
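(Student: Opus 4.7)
The plan is to deduce this corollary directly from Theorem~\ref{t:non-adaptcountabler} by specializing to $\mathbf{A} = \mathbf{Q}$. First I would verify the hypothesis of that theorem: $\mathbf{Q}$ is clearly a subfield of $\mathbf{R}$, and the extension $\mathbf{Q} \subseteq \mathbf{R}$ is transcendental because transcendental real numbers exist (for instance $\pi$ or $e$). Hence all the conclusions of Theorem~\ref{t:non-adaptcountabler} apply to $\sprobrq$.

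Next I would unpack what that theorem tells us in this particular case. It says that we can choose any distinct $x_0, x_1, \ldots, x_n$ from $\mathbf{A} = \mathbf{Q}$ for the $n+1$ signless evaluations, and one further exact evaluation at any point of the set
$$\catl{D}=\mathbf{R} \setminus \{x \in \mathbf{R} \colon \exists_{w\in\mathbf{Q}_n[x]} \; w(x) = 0\}.$$
So the only remaining step is to show that every transcendental real number lies in $\catl{D}$.

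This last step is the crux, and it is short: by definition a transcendental real $x$ is not the root of any nonzero polynomial with rational coefficients, of any degree, and in particular not of any nonzero polynomial in $\mathbf{Q}_n[x]$. Hence $x \in \catl{D}$, so performing the exact evaluation at such an $x$ uniquely determines the sign tuple $\overline{b}$ and recovers $p$.

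I do not expect any serious obstacle here, since the statement is essentially a restatement of Theorem~\ref{t:non-adaptcountabler} together with the elementary observation on transcendental numbers versus algebraic ones; the only care needed is to note explicitly that ``transcendental'' implies non-algebraic of every degree, hence in particular of degree at most $n$.
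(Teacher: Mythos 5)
Your proposal is correct and matches the paper's intent exactly: the corollary is stated without proof precisely because it is the specialization of Theorem~\ref{t:non-adaptcountabler} to $\mathbf{A}=\mathbf{Q}$, with the transcendence of $x$ guaranteeing that $x$ lies in the set $\catl{D}$ (read, as you rightly do, as excluding roots of \emph{nonzero} polynomials in $\mathbf{Q}_n[x]$). Your explicit remark that transcendental implies non-algebraic of every degree, hence in particular of degree at most $n$, is exactly the one-line observation the paper leaves implicit.
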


\begin{remark}
Under assumption of Theorem~\ref{t:non-adaptcountabler} one may solve $\spprob{\mathbf{A}}$ with a single exact evaluation at any transcendental (wrt.~$\mathbf{A}$) number $x$. This follows from the observation that the evaluation $\mor{L_x}{\pprob{\mathbf{A}}}{\mathbf{R}}$ at transcendental $x$ is an injection. On the other hand, without further assumptions about $\mathbf{A}$ there is no upper bound on the algorithmic cost of the problem (in fact, there may be no algorithm realizable on a Blum–Shub–Smale machine). 
\end{remark}


\section{Complex polynomials}
\label{s:complex}
Phaseless identification of complex polynomials is much more complicated than in the case of real polynomials. One reason for this increased difficulty is that the cardinality of the unit sphere for complex numbers, i.e. $\{x \in \mathbf{C} \colon |x| = 1\}$, is  continuum; whereas  the unit sphere for real numbers, i.e. $\{x \in \mathbf{R} \colon |x| = 1\}$, is just the 2-element set $\{-1, 1\}$. The next theorem exploits this difference.
\begin{theorem}\label{t:nocomplex}
There is no solution of the problem 
$$\spprobc$$ 
that relies on the information operator $N$ which 
uses at most $n$ information operations from the class $\Lambda^\textnormal{std}$ and at most $n+1$ information operations 
from the class $|\Lambda^\textnormal{std}|$.
\end{theorem}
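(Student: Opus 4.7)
The plan is to prove that $N$ cannot be injective, since any valid algorithm $\phi$ would have to satisfy $\phi \circ N = \mathrm{id}$. The key reduction is this: for $p \in \pprob{\mathbf{C}}$ and $u \in U(1) \setminus \{1\}$, the equality $N(up) = N(p)$ holds iff every exact evaluation $L$ chosen by $N$ along its adaptive trace on $p$ satisfies $L(p) = 0$. Indeed $L(up) = u L(p)$ coincides with $L(p)$ only in that case, whereas the phaseless evaluations $|M(up)| = |M(p)|$ are $U(1)$-invariant, so that $p$ and $up$ traverse the same sequence of queries and produce identical recorded values. It thus suffices to exhibit a nonzero $p^\ast \in \pprob{\mathbf{C}}$ which is annihilated by every exact evaluation in its own $N$-trace; then $u p^\ast \neq p^\ast$ for any $u \neq 1$ witnesses non-injectivity.

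To build $p^\ast$, I would consider the subtree $T_0$ obtained from $N$'s decision tree by descending, at each exact evaluation node, along the branch labelled by response $0$. Each leaf of $T_0$ records exact evaluation points $x_1, \ldots, x_a$ with $a \leq n$, phaseless evaluation points $z_1, \ldots, z_b$ with $b \leq n+1$, and phaseless response values $r_1, \ldots, r_b$. The polynomials realizing such a leaf are exactly those with $p(x_i) = 0$ and $|p(z_j)| = r_j$; factoring $p = g \cdot q$ with $g(x) = \prod_{i=1}^a (x - x_i)$ and $q \in \pprob[n-a]{\mathbf{C}}$ reduces the modulus conditions to $|q(z_j)| = r_j / |g(z_j)|$ on a polynomial of degree at most $n - a$. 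The adversary's online strategy is to maintain a nonzero candidate $p^\ast$ in the currently consistent family, responding to each phaseless query at $z$ with $|p^\ast(z)|$ (which keeps the whole $U(1)$-orbit of $p^\ast$ in the family) and to each exact query with $0$, updating $p^\ast$ to a fresh representative that additionally vanishes at the newly queried point whenever necessary.

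The main obstacle is justifying that such an update is always possible, i.e.\ that after accumulating $a$ linear vanishing constraints and $b$ modulus constraints compatible with the current $p^\ast$, one can find a nonzero polynomial in the consistent family that additionally vanishes at one more point. I would argue this by dimension bookkeeping, using that the factorized space $\pprob[n-a]{\mathbf{C}}$ retains complex dimension $n - a + 1 \geq 1$; that the phaseless values $r_j$ were chosen by the adversary to be compatible with at least one nonzero polynomial; and that the $U(1)$-invariance of the constraint set means that a single compatible polynomial provides an entire circle of them, yielding the continuum of indistinguishable candidates needed to defeat $N$. The bound $n+1$ on phaseless evaluations is tight in the sense that one additional exact evaluation would let the operator pin down $p$ by standard interpolation; what the above construction escapes is precisely the residual phase ambiguity left by the at most $n$ exact evaluations, each forced to vanish.
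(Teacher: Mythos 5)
There is a genuine gap here, and it is not merely the unfinished ``dimension bookkeeping'' you flag: the approach itself fails. Reducing non-injectivity of $N$ to a collision \emph{inside a single $U(1)$-orbit} (i.e.\ $N(up)=N(p)$, which forces every exact response along the trace to be $0$) is a strictly stronger requirement than what the theorem needs, and an adaptive operator can rule it out entirely. Let the operator spend its $n+1$ phaseless queries first, at fixed distinct points $z_0,\ldots,z_n$, receiving $r_0,\ldots,r_n$, and then choose its $n$ exact points. If some $r_j=0$ while the input $p$ is nonzero, then $p$ cannot also vanish at $n$ fresh distinct exact points, since that would give it more than $n$ roots; so some exact response is nonzero. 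If all $r_j>0$, the operator picks distinct exact points $x_1,\ldots,x_n$ (disjoint from the $z_j$) such that $j\mapsto r_j/\prod_{i=1}^{n}|z_j-x_i|$ is nonconstant --- always possible, e.g.\ by moving one $x_i$ close to $z_0$. Any $p$ vanishing at all the $x_i$ has the form $c\prod_{i=1}^{n}(x-x_i)$, and the committed moduli would force $|c|=r_j/\prod_{i=1}^{n}|z_j-x_i|$ for every $j$ simultaneously, which is unsatisfiable. Hence for this operator the $0$-branch leaf of your tree $T_0$ is \emph{empty}: no nonzero $p^\ast$ with all-zero exact responses is consistent with any committed phaseless answers, so your witness $p^\ast$ versus $up^\ast$ does not exist. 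This operator is of course not a solution --- the theorem is true --- but it defeats your adversary, so your strategy cannot prove the theorem. A telling symptom is that your argument is field-blind: with $u=-1$ it would run verbatim over $\mathbf{R}$, contradicting Theorem~\ref{t:adaptreal}, which identifies a real polynomial with $n+1$ signless evaluations plus a single adaptively placed exact one, i.e.\ within exactly the budget you claim to defeat.

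The paper's proof avoids this trap by producing a collision between two polynomials in \emph{different} phase orbits, at which the exact evaluations take equal but generally \emph{nonzero} values. Fixing $f$ and the $n+1$ phaseless points $x_0,\ldots,x_n$ with $f(x_i)\neq 0$, it considers all interpolants $w_{\overline{b}}$ with $w_{\overline{b}}(x_i)=b_i|f(x_i)|$, $|b_i|=1$, and exploits the specifically complex fact that every $c\in\mathbf{C}$ with $|c|\leq 2$ is a difference of two unimodular numbers, nontrivially whenever $c\neq 0$ (over $\mathbf{R}$ the possible differences are only $\{-2,0,2\}$, which is why the real analogue fails). Given \emph{any} $n$ exact points, one takes a nonzero polynomial vanishing precisely there and rescales it so that its values at the $x_i$, divided by $|f(x_i)|$, lie in the disk of radius $2$; it is then $w_{\overline{b}}-w_{\overline{b'}}$ for some admissible $\overline{b}\neq\overline{b'}$, so $w_{\overline{b}}$ and $w_{\overline{b'}}$ agree at all exact points and share all phaseless values, defeating the information without any evaluation being forced to vanish. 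If you want to repair your online adversary, it must be permitted to answer exact queries with these common nonzero values rather than descending the $0$-branch.
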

\begin{proof}

Let us denote $\catl{B}_k = \{\overline{x} \in \mathbf{C}^k \colon \forall_{0 \leq i < k} \; |x_i| = 1\}$ and $\catl{B}_k^{(2)} = \{\tuple{\overline{x}, \overline{y}} \in \catl{B}_k \times \catl{B}_k \colon \overline{x} \neq \overline{y}\}$.
Consider a  a sequence 
$x_0, x_1, \ldots, x_n \in \mathbf{C}$ of distinct complex numbers and a polynomial $f \in \mathbf{C}_n[x]$ that does not have a root at any of these numbers --- i.e., $f(x_i) \neq 0$ for all  $0\leq i\leq n$.

Note that for every tuple $\overline{b} \in \mathbf{C}^{n+1}$ there is exactly one polynomial $w_{\overline{b}} \in \pprobc$ such that 
$w_{\overline{b}}(x_i) = \overline{b}_i |f(x_i)|$ 
for all $0\leq i\leq n$. 
By linearity, for all $\overline{b},\overline{b'}\in\mathbf{C}^{n+1}$, 
the condition $w_{\overline{b}} = w_{\overline{b'}}$ can be rewritten as $w_{\overline{b}- \overline{b'}} = 0$ and then as $w_{\overline{c}} = 0$, where $\overline{c} = \overline{b} - \overline{b'} $. Observe that the set  
$$\catl{U} =  \{\overline{b} - \overline{b'}\in\mathbf{C}^{n+1}\colon \tuple{\overline{b}, \overline{b'}} \in \catl{B}_{n+1}^{(2)}\}$$
is equal to the set
$$\{\overline{c} \in \mathbf{C}^{n+1} \colon \overline{c} \neq 0\ \land\   
|\overline{c}_i| \leq 2\ \textnormal{for all}\ 0\leq i\leq n\}$$
Therefore, if we define
$$\catl{S}=\{\overline{x'} \in \mathbf{C}^{n} \colon 
\exists_{\overline{c} \in \catl{U}} \;  w_{\overline{c}}(\overline{x'}_i) = 0 
\ \textnormal{for all}\ 0\leq i\leq n-1\}$$ 
we see that for $\overline{x'} \in \mathbf{C}^{n}$ we have: 
$$\overline{x'} \in \catl{S} \Leftrightarrow 
\exists_{\overline{b}, \overline{b'}\in\catl{B}_{n+1}^{(2)}} \forall_{0 \leq i < n} \;  w_{\overline{b}}(\overline{x'}_i) = w_{\overline{b'}}(\overline{x'}_i) 
$$ 
We argue that $\catl{S}=\mathbf{C}^n$. Consider any tuple 
$\overline{x'} \in \mathbf{C}^n$ and any non-zero polynomial $p \in \pprobc$ 
for which $\overline{x'}$ is a tuple of all its roots. 
If $M = \frac{1}{2}\max_{0\leq i\leq n}{\frac{|p(\overline{x}_i)|}{|f(\overline{x}_i)|}}$, 
then the polynomial $p/M$ is non-zero and satisfies 
$$\frac{|(p/M)(\overline{x}_i)|}{|f(\overline{x}_i)|} \leq 2\ \textnormal{for all}
\ 0\leq i\leq n.$$ 
Therefore, there is a non-zero tuple 
$\overline{c}\in\mathbf{C}^{n+1}$ such that 
$(p/M)(\overline{x}_i) = \overline{c}_i |f(\overline{x}_i)|$ and $|\overline{c}_i| \leq 2$ 
for all $0\leq i\leq n$. Since $p/M \in \mathbf{C}_n[x]$, it follows that 
$p/M=w_{\overline{c}}$, and hence $w_{\overline{c}}(\overline{x'}_i) = 0$ 
for all $0\leq i\leq n-1$. It means that $\overline{x'}\in\catl{S}$, so 
indeed $\catl{S}=\mathbf{C}^n$. 

Note that $f \in \{w_{\overline{b}}\in\mathbf{C}_n[x] \colon \overline{b}\in 
\catl{B}^{n+1}\}$ and for every $w_{\overline{b}}$, it  holds that 
$$|w_{\overline{b}}(x_i)|=|f(x_i)|\ \textnormal{for all}\ 0\leq i\leq n$$
Now, if $x'_0,x'_1,\ldots,x'_{n-1}\in\mathbf{C}$ is any sequence of distinct complex numbers,  
there are some $\overline{b},\overline{b'}\in\catl{B}^{n+1}$ such that 
$\overline{b} \neq \overline{b'}$ and 
$w_{\overline{b}}(x'_i) = w_{\overline{b'}}(x'_i)= 
f(x'_i)$ 
for all $0\leq i \leq n-1$. Since $f$ is equal to at most one of 
$w_{\overline{b}}$ or $w_{\overline{b'}}$, we cannot identify $f$. 
\end{proof}

\begin{corollary}
There is no solution of the problem 
$$\sprobc$$ 
that relies on the information operator $N$ using information operations from 
the class $|\Lambda^\textnormal{std}|$ and such that 
$\textnormal{cost}_i(N,v)\leq 2n+1$ for all $v\in \overline{\mathbf{C}_n[x]}$.
\end{corollary}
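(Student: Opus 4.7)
My approach would be to reduce the corollary to Theorem~\ref{t:nocomplex} using two standard manipulations that are already set up in the computational framework section. First, I would invoke the stated equivalence between $\sprobc$ with signless queries and $\spprobc$ with one exact plus signless queries: any solution of $\sprobc$ using at most $2n+1$ operations from $|\Lambda^{\textnormal{std}}|$ yields an equivalent solution of $\spprobc$ that performs exactly one exact evaluation from $\Lambda^{\textnormal{std}}$ (to fix a canonical representative of the phase orbit) together with at most $2n$ signless evaluations from $|\Lambda^{\textnormal{std}}|$.

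Second, I would ``upgrade'' $n-1$ of the remaining signless evaluations to exact evaluations. Whenever the algorithm would query $|L_i(f)|$, we may instead query $L_i(f)$ and pass its modulus on to the rest of the algorithm; since an exact evaluation determines the corresponding signless one, the adaptive decision tree continues unchanged. The simulated algorithm then uses at most $1+(n-1)=n$ exact and at most $2n-(n-1)=n+1$ signless evaluations while still solving $\spprobc$, which directly contradicts Theorem~\ref{t:nocomplex}.

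The genuine combinatorial content --- namely, constructing polynomial ambiguities that survive $n$ exact together with $n+1$ signless evaluations --- has already been done inside Theorem~\ref{t:nocomplex}, so no real obstacle remains for the corollary itself. The only minor caveat worth recording is that the reduction implicitly needs $n \geq 1$: for $n = 0$ a constant complex polynomial is already identified up to phase by any single signless evaluation of its value, so the budget $2n+1 = 1$ trivially suffices in that degenerate case.
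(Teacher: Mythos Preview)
Your argument is correct and is essentially the paper's own proof: invoke the equivalence between $\sprobc$ and $\spprobc$ from the framework section, then upgrade $n-1$ of the $2n$ signless queries to exact ones (the paper phrases this as ``$|\Lambda^{\textnormal{std}}|$ is weaker than $\Lambda^{\textnormal{std}}$'') to land inside the regime forbidden by Theorem~\ref{t:nocomplex}. Your caveat about $n=0$ is a genuine observation that the paper glosses over --- for $n=0$ a single signless evaluation indeed identifies the class, so both the corollary and the paper's proof tacitly require $n\geq 1$.
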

\begin{proof}
Since the information $|\Lambda^{\textnormal{std}}|$ is weaker than the information 
$\Lambda^{\textnormal{std}}$, it follows from Theorem~\ref{t:nocomplex} that 
$\spprobc$ cannot have a solution based on an information operator using exactly 
one information operation from the class $\Lambda^\textnormal{std}$ and $2n$ 
information operations from the class $|\Lambda^\textnormal{std}|$. Consequently, 
$\sprobc$ cannot have a solution based on an information operator using $2n+1$ 
information operations from the class $|\Lambda^\textnormal{std}|$.
\end{proof}

Let us see how Theorem~\ref{t:nocomplex} works on a simple example.
\begin{example}\label{e:nogo:reals}
Let $f \in \probc$ and assume that the evaluation points are $\overline{x} = \tuple{1, 2, 3, 4}$ with their corresponding values all equal to $1$. Consider (any) triple of points, for example $\{0, 5, 6\}$ and take any non-zero polynomial whose roots are at these points, for instance:
$$p(x) = \frac{1}{24}x^3 - \frac{11}{24} x^2 + \frac{5}{4} x$$
The polynomial $p$ maps tuple $\overline{x} = \tuple{1, 2, 3, 4}$ to tuple $\overline{c} = \tuple{\frac{5}{6}, 1, \frac{3}{4}, \frac{1}{3}}$. Therefore, $p(x_i)=c_i f(x_i)$ for all $0 \leq i < 4$ and $|c_i|\leq 2$ for $0\leq i < 4$. Let us decompose $\overline{c}$ as $\overline{c} = \overline{b} - \overline{b}'$, where 
$| b_i| = |b_i'| = 1$ for $0\leq i \leq 3$:
\begin{align*}
\overline{b} &= \tuple{\frac{5 + \sqrt{119}i}{12},\frac{1+\sqrt{3}i}{2},\frac{3+\sqrt{55}i}{8},\frac{1+\sqrt{35}i}{6}}\\
\overline{b}' &= \tuple{\frac{-5 + \sqrt{119}i}{12},\frac{-1+\sqrt{3}i}{2},\frac{-3+\sqrt{55}i}{8},\frac{-1+\sqrt{35}i}{6}}
\end{align*}

Figure~\ref{f:complex:example:wb} shows interpolating polynomials $w_{\overline{b}}$ and $w_{\overline{b}'}$, whereas  Figure~\ref{f:complex:example:abs} shows the plot of $|w_{\overline{b}}|$, which is equal to $|w_{\overline{b}'}|$ on $\mathbf{R}$.

 \begin{figure}
  \centering
    \includegraphics[width=\textwidth]{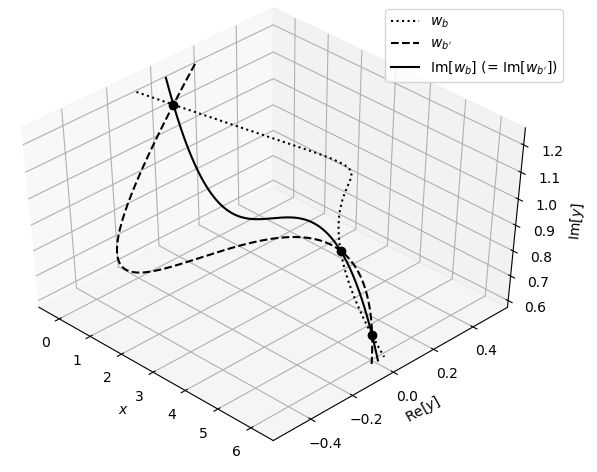}
    \caption{Plot of $w_b$ and $w_{b'}$ for $x \in [-0.1, 6.1]$.}
    \label{f:complex:example:wb}
\end{figure}

\begin{figure}
  \centering
    \includegraphics[width=0.7\textwidth]{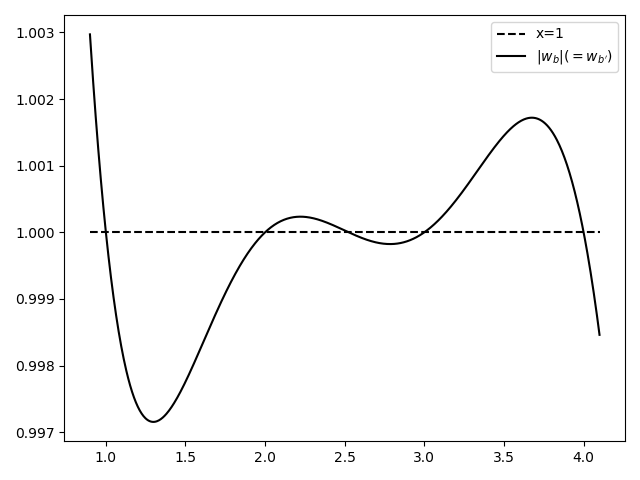}
    \caption{Plot of $|w_b|$ on $\mathbf{R}$ (which is equal to $ |w_{b'}|$ on $\mathbf{R}$).}
    \label{f:complex:example:abs}
\end{figure}
Note that the absolute values of both $w_{\overline{b}}$ and $w_{\overline{b}'}$ are all ones at $\tuple{1, 2, 3, 4}$, and the values are equal on  $\tuple{0, 5, 6}$ 
(in fact, in this example, the absolute values are equal on the whole real line).
\end{example}

The above example shows that if we are not careful enough in choosing the evaluation points, we may not be able to distinguish between the polynomials in \emph{any} number of evaluations (recall Figure~\ref{f:complex:example:abs}). The next example shows that it is not sufficient to evaluate polynomials at points with different imaginary values.

\begin{example}\label{e:nogo:units}
Consider the following polynomials:
\begin{align*}
p(x) &= 2x + 1\\
q(x) &= x + 2\\
\end{align*}
We have:
\begin{align*}
|p(e^{ix})|^2 &= 2e^{ix} + 1 = (2\cos(x) + 1)^2 + 4\sin^2(x) = 4\cos(x) + 5\\
|q(e^{ix})|^2 &= e^{ix} + 2 = (\cos(x) + 2)^2 + \sin^2(x) = 4\cos(x) + 5 \\
\end{align*}

Therefore, $|p(e^{ix})| = |q(e^{ix})|$, i.e.~the absolute values of $p$ and $q$ on the unit circle are equal.
\end{example}

On the other hand, if we know the polynomial in advance, we may always choose $n+2$ points such that the absolute values at the chosen points identify the polynomial up to a phase.

\begin{example}
Let $\omega_0, \omega_1,\ldots, \omega_{n}$ be the sequence of $(n+1)$-th primitive roots of $1$. There is exactly one polynomial $w$ of degree at most $n$ such that: $w(0) = 1$ and $|w(\omega_i)| = 1$ for all $0\leq i \leq n$.

Consider a polynomial $w(x) = a_0 + a_1x + a_2x^2 + \cdots + a_nx^n$. The constraint $w(0) = 1$ says that $a_0 = 1$. Now, let us take the Vandermonde matrix $T$ at $\omega_i$. We have to solve the equation: $T \overline{a} = \overline{u}$, where $u_i = 1$ and $a_0 = 1$. Because $T$ is non-singular, this is equivalent to the equation $\overline{a} = T^{-1} \overline{u}$. The first row of the inverse of the Vandermonde matrix is of the form $[\frac1{n+1}, \frac1{n+1}, \cdots, \frac1{n+1}]$. Thus we obtain equation: $1 = \frac1{n+1} \sum_{i=0}^{n} u_i$, which is satisfied only if $u_i = 1$ for all $0 \leq i \leq n$. 
Therefore, $\overline{a} = T^{-1} \overline{1}$ is the unique solution.
\end{example}

Example~\ref{e:nogo:reals} shows that it is not sufficient to evaluate polynomials at any fixed points with the same \emph{imaginary} value, whereas Example~\ref{e:nogo:units} shows that it is not sufficient to evaluate polynomials at any fixed points with the same \emph{real} value. One may wonder if it is sufficient to evaluate polynomials at some fixed set of points whose both \emph{real} and \emph{imaginary} values vary. The next theorem gives an affirmative answer to this question. 

\begin{theorem}
The computational complexity of the problem 
$$\sprobc$$ 
for the class $|\Lambda^{\textnormal{std}}|$ is polynomial in $n$. 
Furthermore, it has a solution $(N,\phi)$ such that $N$ is nonadaptive and $\textnormal{cost}_i(N,v)=(2n+1)^2$ for all 
$v\in \overline{\mathbf{C}[x]}$, and  $\sup\{\textnormal{cost}_a(\phi,y)\colon y\in N(\overline{\mathbf{C}_n[x]})\}$ 
is polynomial in $n$.
\end{theorem}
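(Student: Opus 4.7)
The plan is to reduce phaseless recovery of $p \in \mathbf{C}_n[x]$ to classical bivariate polynomial interpolation applied to the squared modulus $|p|^2$. Writing $z = x + iy$ with $x, y \in \mathbf{R}$, the expansion
\begin{equation*}
|p(z)|^2 \;=\; p(z)\overline{p(z)} \;=\; \sum_{j,k=0}^n a_j\,\overline{a_k}\,(x+iy)^j(x-iy)^k
\end{equation*}
exhibits $|p|^2$ as a polynomial in the two real variables $x, y$ of degree at most $2n$ in each. Hence it is uniquely determined by its values on any tensor grid $\{(x_i, y_j) : 0 \le i, j \le 2n\}$ built from $2n+1$ distinct real $x$-values and $2n+1$ distinct real $y$-values. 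These values are obtained by squaring the $(2n+1)^2$ phaseless evaluations of $p$ at the fixed complex points $x_i + i y_j$ (a non-adaptive information operator of cost $(2n+1)^2$), and standard two-dimensional Lagrange interpolation reconstructs the bivariate polynomial $H(x, y) = |p(x+iy)|^2$ in time polynomial in $n$.

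The algebraic heart of the argument is the formal substitution $x = (z+w)/2$, $y = (z-w)/(2i)$ in $H$, which yields
\begin{equation*}
Q(z, w) \;=\; H\!\left(\tfrac{z+w}{2},\, \tfrac{z-w}{2i}\right) \;=\; \sum_{j,k=0}^n a_j\,\overline{a_k}\, z^j w^k \;=\; p(z)\, p^*(w),
\end{equation*}
where $p^*(w) := \sum_{k=0}^n \overline{a_k}\, w^k$. Thus $Q$ splits as a product of a polynomial in $z$ alone and a polynomial in $w$ alone, which makes extraction of a scalar multiple of $p$ immediate: pick any $w_0 \in \mathbf{C}$ for which $Q(\cdot, w_0) \not\equiv 0$ (since $p^*$ has at most $n$ roots, testing $n+1$ preselected candidates and inspecting coefficients suffices, in polynomially many arithmetic operations). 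Then $\tilde{p}(z) := Q(z, w_0) = c\,p(z)$ for the unknown nonzero constant $c = p^*(w_0) \in \mathbf{C}$.

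To eliminate the remaining scalar ambiguity down to a unit, I use that $|p(z_0)|^2 = H(\textnormal{Re}(z_0), \textnormal{Im}(z_0))$ is computable at any $z_0 \in \mathbf{C}$ from the interpolant $H$. Choosing any $z_0$ with $H(\textnormal{Re}(z_0), \textnormal{Im}(z_0)) \neq 0$ (which exists unless $p \equiv 0$, a trivial case handled separately), the identity $|\tilde{p}(z_0)|^2 = |c|^2\, H(\textnormal{Re}(z_0), \textnormal{Im}(z_0))$ determines $|c|$, and the rescaled polynomial $\tilde{p}/|c|$ equals $(c/|c|)\,p$, identifying $p$ up to the unit $u = c/|c|$. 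The only non-routine step is the realisation that $|p|^2$ viewed on $\mathbf{C}$ is exactly the restriction to the anti-diagonal $w = \overline{z}$ of the factorable bivariate polynomial $p(z)\,p^*(w)$; everything else is interpolation, polynomial substitution, and polynomial arithmetic, all polynomial in $n$.
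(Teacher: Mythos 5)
Your proof is correct, and it reaches the stated bounds by a genuinely different route from the paper's. The paper works in polar coordinates: writing $p(x)=\sum_k a_k e^{i\alpha_k}x^k$, it expands $|p(xe^{iy})|^2$ as a trigonometric polynomial of degree $n$ in the angle $y$ whose Fourier coefficients are polynomials of degree at most $2n$ in the radius $x$; it then spends the same $(2n+1)^2$ budget on $2n+1$ angles on each of $2n+1$ circles, recovers the coefficients $A_m(x_i),B_m(x_i)$ by inverse cosine/sine transforms, interpolates in the radius, and finally extracts the moduli $a_k=\sqrt{b_{k,0}}$ and the phase differences $\beta_{k,m}=\alpha_k-\alpha_{k-m}$ from the pairs $\bigl(\cos\beta_{k,m},\sin\beta_{k,m}\bigr)$, fixing $\alpha_0$ to pin down the class. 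Your Cartesian argument instead interpolates the bivariate polynomial $H(x,y)=|p(x+iy)|^2$ of degree at most $2n$ in each variable on a tensor grid and exploits the polarization identity $Q(z,w)=H\bigl(\tfrac{z+w}{2},\tfrac{z-w}{2i}\bigr)=p(z)p^*(w)$, reading off a scalar multiple of $p$ as the slice $Q(\cdot,w_0)$. This buys two real advantages: the algorithm is purely algebraic (no trigonometric transforms, and no recovery of angles from their sines and cosines, which in the paper's argument strains the declared BSS-plus-$\sqrt{\cdot}$-and-$\sin$ model; you need only field operations and one square root for the normalization by $|c|$), and it is uniform in the coefficient pattern: when some $a_k$ vanish the paper's phase-unwrapping step degenerates ($\beta_{k,m}$ is not recoverable when $b_{k,m}=2a_ka_{k-m}=0$, and ``fixing $\alpha_0$'' is vacuous if $a_0=0$), so it tacitly needs a repair chaining phases through nonzero coefficients, whereas your slice $Q(\cdot,w_0)=p^*(w_0)\,p(\cdot)$ recovers all coefficients simultaneously regardless of which vanish. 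What the paper's route buys in exchange is explicit structured data (moduli and pairwise phase differences) and sampling on circles, which matches the trigonometric-moment viewpoint. Two small points you should make explicit to be airtight: the substitution defining $Q$ is legitimate because the interpolant agrees with $\sum_{j,k}a_j\overline{a_k}(x+iy)^j(x-iy)^k$ as a formal polynomial (both lie in the tensor space of degree at most $2n$ per variable and agree on all of $\mathbf{R}^2$), so evaluating at complex arguments is sound; and a normalization point $z_0$ with $H\neq 0$ can be found in polynomial time by taking any grid point with a nonzero measured value, since a nonzero $H$ of degree at most $2n$ in each variable cannot vanish on the whole $(2n+1)\times(2n+1)$ grid, by iterated univariate root counting.
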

\begin{proof}
Let us consider the general form of a complex polynomial ${p \in \pprobc}$:
$$p(x) = a_0e^{\alpha_0i} + a_1e^{\alpha_1i}x + a_2e^{\alpha_2i}x^2 + \cdots + a_ne^{\alpha_ni}x^n$$
where all parameters are real and $a_i$ are non-negative. By using the circular coordinates for $x$, we get:
$$p(xe^{yi}) = a_0e^{\alpha_0i} + a_1xe^{(\alpha_1 + y)i} + a_2x^2e^{(\alpha_2 + 2y)i} + \cdots + a_nx^ne^{(\alpha_n + ny)i}$$
The formula for the (square of) absolute value of $p(xe^{yi})$ takes the following form:
\begin{align*}
|p(xe^{yi})|^2 &= (a_j\cos(\alpha_0) + a_{1}x\cos(\alpha_{1} + y) + \cdots + a_nx^{n}\cos(\alpha_n + ny))^2\\
&+ (a_j\sin(\alpha_0) + a_{1}x\sin(\alpha_{1} + y) + \cdots + a_nx^{n}\sin(\alpha_n + ny))^2\\
&= \sum_{m = 0}^n \sum_{k=m}^n b_{k, m} x^{2k-m}(\cos(\beta_{k, m}) \cos(my) + \sin(\beta_{k, m})\sin(my))\\
\end{align*}
where:

\begin{align}\label{eq:new:coeff}
b_{k, m} &= \left\{
  \begin{array}{ll}
    a_k^2 & \textnormal{if}\  m = 0  \\
    2 a_k a_{k-m} & \textnormal{otherwise} \\
  \end{array} \right.\\
\beta_{k, m} &= \alpha_k - \alpha_{k-m}
\end{align}

Let $x_0, x_1, x_2, \cdots, x_{2n}$ be a sequence of distinct non-negative real numbers. For every $x_i$ let us define function $\mor{\widehat{x_i}}{[-\pi, \pi]}{\mathbf{R}}$ as follows:
$$\widehat{x_i}(\alpha) = |p(x_ie^{\alpha i})|^2$$
By the above observation, $\widehat{x_i}$ is a trigonometric polynomial:
$$\widehat{x_i}(\alpha) = \sum_{m=0}^{n} A_{x_i, m} \cos(m \alpha) + B_{x_i, m} \sin(m \alpha)$$
with:
\begin{align*}
A_{x_i, 0} &= \sum_{k=j}^n b_{k, 0}x_i^{2k}\\
A_{x_i, m} &= \sum_{k=m}^n  b_{k, m} x_i^{2k-m}\cos(\beta_{k, m})\\
B_{x_i, 0} &= 0\\
B_{x_i, m} &= \sum_{k=m}^n  b_{k, m} x_i^{2k-m}\sin(\beta_{k, m})\\
\end{align*}
To recover coefficients $A_{x_i, m}$ and $B_{x_i, m}$ we use a standard trick. If we set:

$$c_{x_i}(\alpha) = \frac{\widehat{x_i}(\alpha) + \widehat{x_i}(-\alpha)}{2} = \sum_{m=0}^{n} A_{x_i, m} \cos(m \alpha)$$
the coefficients $A_{x_i, m}$ may be recovered by the inverse cosine transform from values $c_{x_i}(\omega_0), c_{x_i}(\omega_1), c_{x_i}(\omega_2), \dotsc, c_{x_i}(\omega_{2n})$ where $\omega_k = \frac{k\pi}{n}$. Similarly, setting:
$$s_{x_i}(\alpha) = \frac{\widehat{x_i}(\alpha) - \widehat{x_i}(-\alpha)}{2} = \sum_{m=1}^{n} B_{x_i, m} \sin(m \alpha)$$
gives the coefficients $B_{x_i, m}$ by the inverse sine transform from values $s_{x_i}(\omega_1), s_{x_i}(\omega_2), \dotsc, s_{x_i}(\omega_{2n})$.

Observe, that to recover both $A_{x_i, m}$ and $B_{x_i, m}$ we need $2n+1$ evaluations of $\widehat{x_i}$. Therefore, to recover $A_{x_i, m}$ and $B_{x_i, m}$ at every $x_i$ for $0 \leq i \leq 2n$, we need $(2n+1)^2$ evaluations of $|p|$.
 
Now, let us consider the following polynomials of order at most $2n$:
\begin{align*}
A_0(x) &= \sum_{k=j}^n b_{k, 0}x^{2k}\\
A_m(x) &= \sum_{k=m}^n  b_{k, m} x^{2k-m}\cos(\beta_{k, m})\\
B_m(x) &= 0\\
B_m(x) &= \sum_{k=m}^n  b_{k, m} x^{2k-m}\sin(\beta_{k, m})\\
\end{align*}
By definition $A_m(x_i) = A_{x_i, m}$ and $B_m(x_i) = B_{x_i, m}$, so we know the values of each $A_m, B_m$ at $2n+1$ distinct points. Therefore, we may recover coefficients $b_{k, 0}$, $b_{k, m}\cos(\beta_{k, m})$ and $b_{k, m} \sin(\beta_{k, m})$ by polynomial interpolation.

Because $a_k$ are non-negative, they are uniquely determined by $a_k^2 = b_{k, 0}$. From Equation~\ref{eq:new:coeff} we can compute remaining $b_{k, m}$. Observe, that now we may compute the values of each pair $\tuple{\cos(\beta_{k, m}), \sin(\beta_{k, m})}$ and then: $\beta_{k, m} = \alpha_k - \alpha_{k-m}$. Fixing $\alpha_0$ fixes the rest of the angles, therefore up to $\alpha_0$ all coefficients of polynomial $p$ are uniquely determined. 
\end{proof}

\section{Summary and further work}
\label{s:conclusion}
In the present paper we have investigated the relation between the number of (exact and 
phaseless) polynomial evaluations and combinatorial cost needed to process it in order 
to recover an unknown polynomial. However, some questions still remain open. Note that 
in Theorem~\ref{t:adaptreal} we have shown that $n+2$ (phaseless) polynomial evaluations 
are sufficient for the (phaseless) recovery of a polynomial, however the combinatorial 
cost of processing of that information, being in fact the cost of a full search 
among all possible 
candidates for a solution, is exponential in~$n$. Hence the natural question arises: 
can we do it faster? More generally, what is the minimal number of (phaseless) evaluations 
of an unknown polynomial $p$ such that it is possible to perform (phaseless)  identification of $p$ in a polynomial time? It is of interest to investigate this both 
in the real and the complex case.

\vskip 1em
\begin{acknnowledgments}
This research was supported by the National Science Centre, Poland, under 
projects 2018/28/C/ST6/00417 (M.~R.~Przyby{\l}ek) and 2017/25/B/ST1/00945 
(P.~Siedlecki).
\end{acknnowledgments}


\bibliographystyle{apalike}
\bibliography{bibIBC}

\end{document}